\newtheorem{theorem}{Theorem}
\newtheorem{lemma}{Lemma}
\newtheorem{assumption}{Assumption}
\newtheorem{standassumption}{Standing Assumption}
\newcommand{\RR}{{\mathbb{R}}}
\newcommand{\NN}{{\mathbb{N}}}
\newcommand{\EE}{{\mathbb{E}}}
\newcommand{\PP}{{\mathbb{P}}}
\newcommand{\JJ}{{\mathbb{J}}}
\newcommand{\FF}{{\mathbb{F}}}
\newcommand{\SVI}{{\mathrm{SVI}}}
\newcommand{\mc}{\mathcal}
\newcommand{\norm}[1]{\left\|#1\right\|}
\newcommand{\normsq}[1]{\left\|#1\right\|^2}
\newcommand{\EEk}[1]{\EE\left[#1|\mc F_k\right]}
\newcommand{\op}{\operatorname}
\newcommand{\bs}{\boldsymbol}
\newcommand{\fineass}{\hfill\small$\blacksquare$}
\title{\LARGE \bf
A damped forward--backward algorithm for stochastic generalized Nash equilibrium seeking
}
\author{Barbara Franci$^{1}$ and Sergio Grammatico$^{1}$% <-this % stops a space
%\thanks{*This work was not supported by any organization}% <-this % stops a space
\thanks{$^{1}$The authors are with the Delft Center for System and Control, TU Delft, The Netherlands
        {\tt\footnotesize \{b.franci-1, s.grammatico\}@tudelft.nl}}%
\thanks{This work was partially supported by NWO under research projects OMEGA (613.001.702) and P2P-TALES (647.003.003), and by the ERC under research project COSMOS (802348).}}
\begin{document}

\maketitle
\thispagestyle{empty}
\pagestyle{empty}

\begin{abstract}

We consider a stochastic generalized Nash equilibrium problem (GNEP) with expected--value cost functions. 
%Our main contribution is to design a distribute forward--backward algorithm through the primal dual analysis of the Karush--Kuhn--Tucker inclusions. 
Inspired by Yi and Pavel (Automatica, 2019), we propose a distributed GNE seeking algorithm by exploiting the forward--backward operator splitting and a suitable preconditioning matrix. 
Specifically, we apply this method to the stochastic GNEP, where, at each iteration, the expected value of the pseudo--gradient is approximated via a number of random samples.
Our main contribution is to show almost sure convergence of our proposed algorithm if the sample size grows large enough.

%Specifically, we build up this method to the stochastic case and prove convergence of the proposed algorithm to a generalized Nash equilibrium.

\end{abstract}

\section{Introduction}

%1
Generalized Nash equilibrium problems (GNEPs) have been widely studied in the literature. Many results are present concerning algorithms and methodologies to find an equilibrium \cite{facchinei2010,facchinei2007} (and the references therein). The reason for this interest is related with the potential applications ranging from economics to engineering via operation research \cite{pavel2007,kulkarni2012}.
%5
In a GNEP, each agent aims at minimizing his own cost function, under some feasibility constraints. The main feature is that both the cost function and the constraints depend on the stategy chosen by the other agents. Due to the presence of shared constraints, the search for a generalized Nash equilibrium is a challenging task in general. 
%3A 

In the deterministic case, several algorithms are available for finding a GNE, both distributed and semi-decentralized \cite{facchinei2010,belgioioso2018,yi2019,belgioioso2017}.
%6 
Among the available methods for GNEP, an elegant approach is to recast the problem as a monotone inclusion through the use of interdependent Karush--Kuhn--Tucker (KKT) conditions. The resulting Lagrangian formulation allows one to seek equilibrium points as the solutions of an associated variational inequality (VI) that is usually more tractable and the literature is wider than the GNEPs literature \cite{bau2011,facchinei2007} (and reference therein). Equilibria obtained in this way are called variational equilibria.
%we seek for an algorithm for finding zeros of a monotone operator through the use of the Karush--Kuhn--Tucker (KKT) condition.  For this reason, we rely on a class of GNEP that can be solved using a stochastic variational inequality (SVI), which is usually more tractable.
%8
%The connection between SGNEP and SVI is made precisely through the KKT conditions of both the VI and the GNEP, and the consequent primal-dual analysis. 

Given a monotone inclusion problem, a powerful procedure is to use an operator splitting scheme to recast the problem as the search for the zeros of a sum of two monotone operators. One of the simplest schemes is the forward--backward (FB) operator splitting \cite{bau2011}. % that allows to compute the variational equilibrium via fixed--point iterations where the two operators are employed in separate steps \cite{bau2011}.
Convergence of such a scheme is guaranteed if the pseudo-gradient mapping of the game is strongly monotone.
Since we study a game-theoretic problem, the algorithm should be distributed, in the sense that each agent should only know his local cost function and his local constraints.
%9
Unfortunately, the FB splitting does not lead to a distributed algorithm when applied to a GNEP. In the deterministic case, preconditioning has been proposed in \cite{belgioioso2018,yi2019} to overcome this problem. 
To the best of our knowledge, the preconditioned FB operator splitting has not been exploited for stochastic GNEPs (SGNEPs) but it would be very relevant from an algorithmic perspective.
%\tc{blue}{In this paper, we extended this method to stochastic GNEPs (SGNEPs). To the best of our knowledge, we are the first to adapt a preconditioned forward--backward operator splitting for SGNEPs. }

%\tc{blue}{To guarantee that each agent is able to compute his cost function, the information on the decision of the other agents are shared through the influence graph. Moreover, since variational equilibria require consensus of the dual variables \cite{facchinei2007vi}, information on the dual variables are shared through the dual variables graph. In this way, each decision maker is able to control his decision variable and a copy of the dual variables and is able to compute his cost function and his constraints.}
%2 
The literature on stochastic Nash equilibrium problems is less rich than its deterministic counterpart \cite{yu2017,ravat2011,xu2013}. However, several problems of interest cannot be modelled without uncertainty. Among others, consider transportation systems, where one source of uncertainty is due to the drivers perception of travel time \cite{watling2006}; %market competition where firms make a decision on their future supply without knowing the demand \cite{demiguel2009}; 
electricity markets where generators produce energy without knowing the actual demand \cite{henrion2007}; or, more generally, problems that can be modelled as networked Cournot games with market capacity constraints where the demand is uncertain \cite{demiguel2009,abada2013}.  
%3B
Mathematically speaking, a SGNEP is a GNEP where the cost functions of the agents are expected--value functions and the distribution of the random variables is unknown. Existence (and uniqueness) of equilibria is studied in \cite{ravat2011} but the study of convergent algorithms is not fully developed yet \cite{xu2013,yu2017}. 

One possible motivation for this lack of results is the presence of the expected--value cost functions.
When the probability distribution of the random variable is known, the expected value formulation can be solved with a technique for deterministic VI.
However, the pseudo gradient is usually not directly accessible, for instance due to the intractability of computing the expected value. For this reason, in many cases, the solution of a stochastic VI relies on samples of the random variable.
There are, in fact, two main methodologies available: sample average approximation (SAA) and stochastic approximation (SA). The SAA approach replaces the expected value with the average over a huge number of samples of the random variable.
This approach is practical in Monte Carlo Simulations or machine learning, when there is a huge number of data available \cite{staudigl2019, iusem2017}. 
In the SA approach, the decision maker draws only one sample of the random variable. This approach is less computationally expensive and more appropriate in a decentralized framework but, in general, it requires stronger assumptions of the problem data \cite{koshal2013,yousefian2017,yousefian2014}.

In this paper, we formulate a distributed stochastic FB algorithm through preconditioning and prove its consequent convergence.
The associated SVI is obtained in the same way as the deterministic case, i.e., via augmented KKT inclusions. Among the possible approaches for solving an SVI \cite{koshal2013,iusem2017,yousefian2017}, we propose a damped forward--backward scheme \cite{rosasco2016} and prove convergence under proper assumptions, i.e., strong monotonicity of the pseudo gradient game mapping and measurability of the random variable.

\textit{Notation:} We use Standing Assumptions to state technical conditions that implicitly hold throughout the paper while Assumptions are postulated only when explicitly used.

$\langle\cdot,\cdot\rangle:\RR^n\times\RR^n\to\RR$ denotes the standard inner product and $\norm{\cdot}$ is the associated euclidean norm. We indicate that a matrix $A$ is positive definite, i.e., $x^\top Ax>0$, with $A\succ0$. $A\otimes B$ indicates the Kronecker product between matrices $A$ and $B$. Given a symmetric $\Phi\succ0$, the $\Phi$-induced inner product is $\langle x, y\rangle_{\Phi}=\langle \Phi x, y\rangle$. The associated $\Phi$-induced norm is defined as $\norm{x}_{\Phi}=\sqrt{\langle \Phi x, x\rangle}$. ${\bf{0}}_m$ indicates the vector with $m$ entries all equal to $0$. Given $x_{1}, \ldots, x_{N} \in \RR^{n}, \boldsymbol{x} :=\op{col}\left(x_{1}, \dots, x_{N}\right)=\left[x_{1}^{\top}, \dots, x_{N}^{\top}\right]^{\top}$. $\op{J}_F=(\op{Id}+F)^{-1}$ is the resolvent of the operator $F$ where $\op{Id}$ is the identity operator. For a closed set $C \subseteq \RR^{n},$ the mapping $\op{proj}_{C} : \RR^{n} \to C$ denotes the projection onto $C$, i.e., $\op{proj}_{C}(x)=\op{argmin}_{y \in C}\|y-x\|$. $\iota_C$ is the indicator function of the set C, that is, $\iota_C(x)=1$ if $x\in C$ and $\iota_C(x)=0$ otherwise. The set-valued mapping $\mathrm{N}_{C} : \RR^{n} \to \RR^{n}$ denotes the normal cone operator of the set $C$ , i.e., $\mathrm{N}_{C}(x)=\varnothing$ if $x \notin C,\left\{v \in \RR^{n} | \sup _{z \in C} v^{\top}(z-x) \leq 0\right\}$ otherwise. Given $\psi : \RR^{n} \to \RR$, $\op{dom}(\psi) :=\{x \in \RR^{n} | \psi(x)<\infty\}$ is the domain and the subdifferential is the set-valued mapping $\partial \psi(x) :=\{v \in \RR^{n} | \psi(z) \geq \psi(x)+v^{\top}(z-x) \forall z \in \operatorname{dom}(\psi)\}$. 

\section{Generalized Nash equilibrium problem}

\subsection{Equilibrium Problem setup}
We consider a set $\mc I=\{1,\dots,N\}$ of agents, each of them deciding on its action variable $x_i\in\RR^{n_i}$ from its local decision set $\Omega_i\in\RR^{n_i}$ with the aim of minimizing its local cost function. Let $\boldsymbol x_{-i}=\op{col}((x_j)_{j \neq i})$ be the decisions of all the agents except for $i$ and define $n=\sum_{i=1}^N n_i$. We consider that there is some uncertainty in the cost function, expressed through the random variable $\xi:\Xi\to\RR^d$, where $(\Xi, \mc F, \PP)$ is the associated probability space. Then, for each agent $i\in\mc I$, we define the cost function $\JJ_i: \RR^{n} \rightarrow \RR$ as 
\vspace{-.15cm}\begin{equation}
\JJ_i(x_i,\boldsymbol x_{-i}):=\EE_\xi[J_i(x_i,\boldsymbol x_{-i},\xi(\omega))]
\vspace{-.15cm}\end{equation}
for some measurable function $J_i:\mc \RR^{n}\times \RR^d\to \RR$. We note that the cost function depends on the local variable $x_i$, the collective decision of the other agents $\bs x_{-i}$ (or a subset) and the random variable $\xi(\omega)$. $\EE_\xi$ represents the mathematical expectation with respect to the distribution of the random variable $\xi$\footnote{For brevity, we use $\xi$ instead of $\xi(\omega)$, $\omega\in\Xi$, and $\EE$ instead of $\EE_\xi$.}. We assume that $\EE[J_i(\boldsymbol x,\xi)]$ is well defined for all the feasible $\boldsymbol x=\op{col}(x_1,\dots,x_N)$.
Furthermore, we consider a game with affine shared constraints, $A\bs x\leq b$. Thus we denote the feasible decision set of each agent $i \in \mc I$ by the set-valued mapping 
\vspace{-.15cm}\begin{equation}\label{constr}
\mc{X}_i(\boldsymbol x_{-i}) :=\left\{y_i \in \Omega_i\; | \;\textstyle{A_i y_i \leq b-\sum_{j \neq i}^{N} A_j x_j}\right\},
\vspace{-.15cm}\end{equation}
where $A_i \in \RR^{m \times n_i}$ and $b\in\RR^m$. The matrix $A_i$ defines how agent $i$ is involved in the coupling constraints. The collective feasible set can be then written as
\vspace{-.15cm}\begin{equation}\label{collective_set}
\bs{\mc{X}}=\bs\Omega \cap\left\{\boldsymbol{y} \in \RR^{n} | A\boldsymbol{y}-b \leq {\bf{0}}_{m}\right\}
\vspace{-.15cm}\end{equation}
where $\bs\Omega=\prod_{i=1}^N\Omega_i$ and $A=\left[A_{1}, \ldots, A_{N}\right]\in\RR^{m\times n}$.
Note that there is no uncertainty in the constraints.

Next, we postulate standard assumptions for the cost functions and the constraints set.
\begin{standassumption}\label{ass_cost}
For each $i \in \mc I$ and $\boldsymbol{x}_{-i} \in \mc{X}_{-i}$ the function $\JJ_{i}(\cdot, \boldsymbol{x}_{-i})$ is convex and continuously differentiable. \fineass
\end{standassumption}

\begin{standassumption}\label{ass_X}
For each $i \in \mc I,$ the set $\Omega_{i}$ is nonempty, compact and convex.
The set $\mc{X}$ satisfies Slater's constraint qualification. \fineass
\end{standassumption}

Formally, the aim of each agent $i$, given the decision variables of the other agents $\bs x_{-i}$, is to choose a action variable $x_i$, that solves its local optimization problem, i.e.,
\vspace{-.15cm}\begin{equation}\label{game}
\forall i \in \mc I:\quad\left\{\begin{array}{cl}
\min_{x_i \in \Omega_i} & \JJ_i(x_i,\boldsymbol x_{-i}) \\ 
\text { s.t. } & A_i x_i \leq b-\sum_{j \neq i}^{N} A_j x_j.
\end{array}\right.
\vspace{-.15cm}\end{equation}
From a game-theoretic perspective, we aim at computing a stochastic generalized Nash equilibrium (SGNE) \cite{ravat2011}, i.e., a collective variable $\boldsymbol{x}^*\in\mc X$ such that, for all $i \in \mc I$:\vspace{-.2cm}
$$\JJ_i(x_i^{*}, \boldsymbol x_{-i}^{*}) \leq \inf \{\JJ_i(y, \boldsymbol x_{-i}^{*})\; | \; y \in \mc{X}_i(\boldsymbol x_{-i}^{*})\}.\vspace{-.2cm}$$
In other words, a SGNE is a set of strategies where no agent can decrease its objective function by unilaterally changing its decision variable.
To guarantee existence of a stochastic equilibrium, let us introduce further assumptions on the local cost functions $J_i$.
\begin{standassumption}\label{ass_J}
For each $i\in\mc I$ and $\xi \in \Xi$, the function $J_{i}(\cdot,\boldsymbol x_{-i},\xi)$ is convex, Lipschitz continuous, and continuously differentiable. The function $J_{i}(x_i,\bs x_{-i},\cdot)$ is measurable and for each $\boldsymbol x_{-i}$, the Lipschitz constant $\ell_i(\boldsymbol x_{-i},\xi)$
is integrable in $\xi .$ \fineass
\end{standassumption}

While, under Standing Assumptions \ref{ass_cost}, \ref{ass_X} and \ref{ass_J}, existence of a GNE of the game is guaranteed by \cite[\S 3.1]{ravat2011}, uniqueness does not hold in general \cite[\S 3.2]{ravat2011}.

Within all possible Nash equilibria, we focus on those that corresponds to the solutions of an appropriate (stochastic) variational inequality. 
Let 
%\vspace{-.15cm}\begin{equation}\label{map_vi}
%\FF(\boldsymbol x)=\op{col}\left(\EE[\nabla_{x_{1}} J_{1}(x_{1}, \boldsymbol x_{-1})], \dots, \EE[\nabla_{x_{N}} J_{N}(x_{N}, \boldsymbol x_{-N})]\right).
%\vspace{-.15cm}\end{equation} 
\vspace{-.15cm}\begin{equation}\label{map_vi}
\FF(\boldsymbol x)=\op{col}\left(\EE[\nabla_{x_{i}} J_{i}(x_{i}, \boldsymbol x_{-i})]_{i\in\mc I}\right).
\vspace{-.15cm}\end{equation} 
%Consider a closed convex set $\mc X \subseteq \RR^{n}$ and a mapping $\FF :\mc X \to \RR^{n}$. 
Formally, the stochastic variational inequality problem $\op{SVI}(\bs{\mc X},\FF)$ is the problem of finding $\bs x^{*} \in \bs{\mc X}$ such that 
\vspace{-.15cm}\begin{equation}\label{eq_vi}
\langle \FF(\bs x^*),\bs x-\bs x^*\rangle\geq 0, \;\text { for all } \bs x \in \bs{\mc X}.
\vspace{-.15cm}\end{equation}
with $\FF(\bs x)$ as in \eqref{map_vi}.
%In a stochastic Nash equilibrium problem, $\FF$ is the pseudo gradient of the cost functions, i.e. 
%\vspace{-.15cm}\begin{equation}\label{map_vi}
%\FF(\boldsymbol x)=\op{col}\left(\EE[\partial_{x_{1}} J_{1}(x_{1}, \boldsymbol x_{-1})], \dots, \EE[\partial_{x_{N}} J_{N}(x_{N}, \boldsymbol x_{-N})]\right).
%\vspace{-.15cm}\end{equation} 
We note that we can exchange the expected value and the gradient in \eqref{map_vi} thanks to Standing Assumption \ref{ass_J} \cite[Lem. 3.4]{ravat2011}. 
We also note that any solution of $\op{SVI}(\mc X , \FF)$ is a generalized Nash equilibrium of the game in (\ref{game}) while the opposite does not hold in general. 
In fact, a game may have a Nash equilibrium while the corresponding VI may have no solution \cite[Prop. 12.7]{palomar2010}.

%For SGNEP, due to the presence of shared constraints, the analysis pass through the dual problem and a forward - backward splitting. Then the problem can be casted as a variational problem. The details of this analysis are described in the forthcoming sections.\\
A sufficient condition for the variational problem in \eqref{eq_vi} to have a solution is that $\FF$ is strongly monotone \cite[Th. 2.3.3]{facchinei2007}, \cite[Lemma 3.3]{ravat2011}, as we postulate next. %For this reason, we take F strongly monotone.
\begin{standassumption}\label{ass_F}
$\FF$ is $\eta$-strongly monotone, i.e., for $\eta>0$, $\langle \FF(x)-\FF(y),x-y\rangle \geq \eta\normsq{x-y}$ for all $x, y \in \RR^{n}$ and $\ell$-Lipschitz continuous, i.e., for $\ell>0$, $\norm{\FF(x)-\FF(y)} \leq \ell\norm{x-y}$ for all $x, y \in \RR^{n}$.
\fineass
\end{standassumption}

\subsection{Operator-theoretic characterization}
In this subsection, we recast the GNEP into a monotone inclusion, namely, the problem of finding a zero of a set-valued monotone operator.

First, we characterize the SGNE of the game in terms of the Karush--Kuhn--Tucker (KKT) conditions for the coupled optimization problems in (\ref{game}). For each agent $i\in\mc I$, let us introduce the Lagrangian function $\mc L_i\left(\boldsymbol x, \lambda_i\right) :=\JJ_i(x_i,\boldsymbol x_{-i})+\iota_{\Omega_i}\left(x_i\right)+\lambda_i^{\top}(A \boldsymbol x-b)$, where $\lambda_i \in \RR_{ \geq 0}^{m}$ is the dual variable associated with the coupling constraints. We recall that the set of strategies $\bs x^{*}$ is a SGNE if and only if the following KKT conditions are satisfied \cite[Th. 4.6]{facchinei2010}: for all $i \in \mc I$
\vspace{-.15cm}\begin{equation}\label{game_kkt}
\left\{\begin{array}{l}
0 \in \EE[\nabla_{x_i} J_i(x_i^{*}, \boldsymbol x_{-i}^{*},\xi)]+\mathrm{N}_{\Omega_i}\left(x_i^{*}\right)+A_i^{\top} \lambda_i \\
0\in -(A\boldsymbol x^*-b)+\mathrm N_{\RR^m_{\geq 0}}(\lambda^*).
\end{array}\right.
\vspace{-.15cm}\end{equation}
Similarly, we can use the KKT conditions to characterize a variational GNEP (v-GNEP), studying the Lagrangian function associated to the SVI. Since $\bs x^{*}$ is a solution of $\op{SVI}(\bs{\mc X} , \FF)$ if and only if $\bs x^{*} \in \operatorname{argmin}\limits_{\bs y \in \mc{X}}\left(\bs y-\bs x^{*}\right)^{\top} \FF\left(\bs x^{*}\right),$ the associated KKT optimality conditions are, for all $i \in \mc I$
\vspace{-.15cm}\begin{equation}\label{VI_KKT}
\left\{\begin{array}{l}
0 \in\EE[\nabla_{x_i} J_i(x_i^{*}, \boldsymbol x_{-i}^{*},\xi)]+\mathrm{N}_{\Omega_i}\left(x_i^{*}\right)+A_i^{\top} \lambda, \\ 
0\in -(A\boldsymbol x^*-b)+\mathrm N_{\RR^m_{\geq 0}}(\lambda^*).
\end{array}\right.
\vspace{-.15cm}\end{equation}
We note that (\ref{VI_KKT}) can be written in compact form as\vspace{-.15cm}
$$
0\in\mc T(\bs x,\bs \lambda):=\left[\begin{array}{c}
\mathrm{N}_{\Omega}(\boldsymbol x)+\FF(\boldsymbol x)+A^{\top} \lambda \\ 
\mathrm{N}_{\RR_{ \geq 0}^{m}}(\lambda)-(A \boldsymbol x-b)
\end{array}\right],\vspace{-.15cm}
$$
where $\mc T:\bs{\mc X}\times \RR^m_{\geq 0}\rightrightarrows \RR^{n}\times\RR^m$ is a set-valued mapping.
It follows that the v-GNE correspond to the zeros of the mapping $\mc T$. The next proposition shows the relation between SGNE and variational equilibria.

\begin{lemma}\label{var_eq}\cite[Th. 3.1]{facchinei2007vi}
The following statements hold:
\begin{enumerate}
\item Let $\boldsymbol x^*$ be a solution of $\SVI (\mc X, \FF)$ at which the KKT conditions (\ref{VI_KKT}) hold. Then $\boldsymbol x^*$ is a solution of the SGNEP at which the KKT conditions (\ref{game_kkt}) hold with
$\lambda_1=\lambda_2=\dots=\lambda_N=\lambda^*$
\item Viceversa, let $\boldsymbol x^*$ be a solution of the SGNEP at which KKT conditions (\ref{game_kkt}) hold with $\lambda_1=\lambda_2=\dots=\lambda_N=\lambda^*$. Then, $\boldsymbol x^*$ is a solution of $\op{SVI}(\mc X , \FF)$.\hfill\small$\blacksquare$
\end{enumerate}
\end{lemma}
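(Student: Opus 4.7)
The plan is to exploit the structural symmetry between the two KKT systems in \eqref{game_kkt} and \eqref{VI_KKT}. The only difference between them is that in the GNE system each agent $i$ carries its own dual multiplier $\lambda_i$ associated with the coupling constraint $A\bs x \leq b$, whereas the variational system is characterized by a single shared multiplier $\lambda^*$. Both implications of the lemma will therefore follow by either imposing or removing the consensus constraint $\lambda_1 = \cdots = \lambda_N$, provided one first verifies that the pseudo-gradient mapping appearing in both systems is the same object.

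For part (1), I would start from $\bs x^*$ solving $\SVI(\bs{\mc X},\FF)$ with associated multiplier $\lambda^*$ satisfying \eqref{VI_KKT}. Setting $\lambda_i := \lambda^*$ for every $i \in \mc I$ and substituting into the first block of \eqref{VI_KKT}, one recovers precisely the first block of \eqref{game_kkt}. The complementarity condition $0\in -(A\bs x^*-b)+\mathrm N_{\RR^m_{\geq 0}}(\lambda^*)$ is identical in both systems and therefore carries over verbatim. Hence $\bs x^*$ is a SGNE of the game \eqref{game} with the prescribed consensus on the duals.

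For part (2), the reverse direction is analogous. Starting from $\bs x^*$ satisfying \eqref{game_kkt} with $\lambda_1 = \cdots = \lambda_N = \lambda^*$, I would define $\lambda := \lambda^*$ and observe that the per-agent inclusions collapse into the per-agent inclusions of \eqref{VI_KKT}, while the dual inclusion is unchanged. Consequently $(\bs x^*, \lambda^*)$ is a zero of the operator $\mc T$, which was already established in the paper as being equivalent to $\bs x^*$ solving $\SVI(\bs{\mc X},\FF)$.

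The argument is essentially algebraic, so no step is particularly delicate; the main thing to be careful about is ensuring that the mapping $\FF$ that appears in both KKT systems is literally the same operator. This is legitimate because Standing Assumption \ref{ass_J} allows the interchange of expectation and gradient via \cite[Lem.\ 3.4]{ravat2011}, as already recalled after \eqref{map_vi}, so that $\EE[\nabla_{x_i} J_i(x_i,\bs x_{-i},\xi)] = \nabla_{x_i} \JJ_i(x_i,\bs x_{-i})$ is a well-defined deterministic quantity. Once this is in place, the stochastic KKT systems inherit the same structural equivalence as in the classical deterministic VI-versus-GNEP result of \cite{facchinei2007vi}, and no assumption beyond those already standing is required.
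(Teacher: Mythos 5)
Your argument is correct, but it is worth noting that the paper does not actually prove this lemma: it is imported wholesale as \cite[Th.~3.1]{facchinei2007vi}, so what you have written is essentially a reconstruction of that cited theorem's proof specialized to the affine shared constraints of \eqref{constr}. Your reconstruction is sound, and the algebraic core --- setting $\lambda_i=\lambda^*$ to pass from \eqref{VI_KKT} to \eqref{game_kkt} and back --- is exactly the right mechanism. The one place where you are leaning on more than ``pure algebra'' is the passage from \emph{KKT systems} to \emph{solutions}: in part (1) you need the per-agent KKT inclusions to be \emph{sufficient} for $x_i^*$ to minimize $\JJ_i(\cdot,\bs x_{-i}^*)$ over $\mc X_i(\bs x_{-i}^*)$, which uses convexity of each agent's problem (Standing Assumptions \ref{ass_cost}--\ref{ass_J}); in part (2) you need the KKT system \eqref{VI_KKT} to be sufficient for $\bs x^*$ to solve $\SVI(\bs{\mc X},\FF)$, which uses the decomposition $\mathrm N_{\bs{\mc X}}(\bs x^*)=\mathrm N_{\bs\Omega}(\bs x^*)+A^\top\mathrm N_{\RR^m_{\geq 0}}(A\bs x^*-b)$ valid under Slater's constraint qualification (Standing Assumption \ref{ass_X}). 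You implicitly invoke both by appealing to the equivalences the paper recalls from \cite[Th.~4.6]{facchinei2010} and from the zeros of $\mc T$; that is legitimate, but those recalled facts are precisely where the constraint qualification does real work, and it is also why the lemma is phrased as ``a solution \emph{at which the KKT conditions hold}'' rather than just ``a solution.'' Your closing remark on the interchange of expectation and gradient via \cite[Lem.~3.4]{ravat2011} is the right observation for why the deterministic result of \cite{facchinei2007vi} transfers verbatim to the stochastic setting. In short: no gap, and your version makes explicit an argument the paper delegates entirely to a citation.
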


Essentially, Lemma \ref{var_eq} says that variational equilibria are those such that the shared constraints have the same dual variable for all the agents.

\section{Preconditioned forward--backward generalized Nash equilibrium seeking}
In this section, we propose a distributed forward--backward algorithm for finding variational equilibria of the game in \eqref{game}.

We suppose that each agent $i$ only knows its local data, i.e., $\Omega_i$, $A_i$ and $b_i$. Moreover, each player is able to compute, given $\op{col}(x_i,\boldsymbol x_{-i})$, $\EE[\nabla_{x_i} J_i(x_i,\boldsymbol x_{-i},\xi)]$ (or an approximation, as exploited later in the section). We assume therefore that each agent has access to the action variables that affect its local gradient (full information setup). These information can be retrieved, for each agent $i$, from the set $\mc N_i^J$, that is, the set of agents $j$ whose action $x_j$ explicitly influences $\JJ_i$.

%We assume that the agents are connected on the interference graph through which they have access to the information necessary for the cost function. Formally, the interference graph $\mc G_J=(\mc I,\mc E_J)$ defines the relationship between the cost function of the agents. An edge $(i,j)\in\mc E_J$ if $\JJ_i(x_i,x_{-i},\xi)$ explicitly depends on the decision variable $x_j$ of agent $j$ \cite{yi2019,yu2017}.

Since, by Lemma \ref{var_eq}, the configuration of the v-GNE requires consensus of the dual variables, we introduce an auxiliary variable $z_i\in\RR^m$ for all $i\in\mc I$. The role of $\bs z=\op{col}(z_1,\dots,z_N)$ is to enforce consensus but it does not affect the property of the operators and of the algorithm. More details on this variable are given in Section \ref{sec_pFB}. The auxiliary variable and a local copy of the dual variable $\lambda_i$ are shared through the dual variables graph, $\mc G_\lambda=(\mc I,\mc E_\lambda)$. The set of edges $\mc E_\lambda$ represents the exchange of the private information on the dual variables: $(i,j)\in\mc E_\lambda$ if player $i$ can receive $\{\lambda_j,z_j\}$ from player $j$. The set of neighbours of $i$ in $\mc G_\lambda$ is given by $\mathcal{N}_{i}^{\lambda}=\left\{j |(j, i) \in \mathcal{E}_{\lambda}\right\}$ \cite{yi2019,yu2017}. \smallskip%In conclusion, each agent control his own decision variable and a local copy of the dual variable $\lambda_i$ and of the auxiliary variable $z_i$ while the information on the other players are shared through the graphs. 
\begin{standassumption}\label{ass_graph}
The dual variable graph is undirected and connected.\fineass
\end{standassumption}
Each agent feasible set implicitly depends on all the other agents variables through the shared constraints. Thus, to reach consensus of the multipliers, all agents must coordinate and therefore, $\mc G_\lambda$ must be connected.

The weighted adjacency matrix of the dual variables graph is indicated with $W\in\RR^{N\times N}$. 
Let $L=D-W\in\RR^{N\times N}$ be the Laplacian matrix associated to the adjacency matrix $W$, where $D=\op{diag}(d_1,\dots,d_N)$ is the diagonal matrix of the degrees and $d_i=\sum_{j=1}^{N} w_{i,j}$. It follows from Standing Assumption \ref{ass_graph} that $W$ and the associated Laplacian $L$ are both symmetric, i.e., $W=W^\top$ and $L = L^\top$. Moreover, Standing Assumption \ref{ass_graph} is fundamental to guarantee that the coupling constraints are satisfied since agents have to reach consensus of the dual variables. 

Next we present a distributed forward--backward algorithm with damping for solving the SGNEP in \eqref{game} (Algorithm \ref{DSFB_ave}). For each agent $i$, the variables $x_i^k$, $z_i^k$ and $\lambda_i^k$ denote the local variables $x_i$, $z_i$ and $\lambda_i$ at the iteration time $k$ while $\alpha_i$, $\nu_i$ and $\sigma_i$ are the step sizes. 

\begin{algorithm}
\caption{Distributed Stochastic Forward--Backward}\label{DSFB_ave}
Initialization: $x_i^0 \in \Omega_i, \lambda_i^0 \in \RR_{\geq0}^{m},$ and $z_i^0 \in \RR^{m} .$\\
Iteration $k$: Agent $i$\\
(1): Receives $x_{j, k}$ for $j \in \mathcal{N}_{i}^{h}, \lambda_{j}^k$ for $j \in \mathcal{N}_{i}^{\lambda}$ then updates:
$$\begin{aligned}
&\tilde x_i^k=\op{proj}_{\Omega_{i}}[x_i^k-\alpha_{i}(\hat F_{i}(x_i^k, \boldsymbol{x}_{-i}^k,\xi_i^k)-A_{i}^\top \lambda_i^k)]\\
&\tilde z_i^k=z_i^k+v_{i} \sum\nolimits_{j \in \mathcal{N}_{i}^{\lambda}} w_{i,j}(\lambda_i^k-\lambda_{j}^k)\\
\end{aligned}$$
(2): Receives $z_{j, k+1}$ for all $j \in \mathcal{N}_{i}^{\lambda}$ then updates:
$$\begin{aligned}
&\tilde \lambda_i^k=\op{proj}_{\RR_{+}^{m}}\left[\lambda_i^k+\sigma_{i}\left(A_{i}(2 \tilde x_i^k-x_i^k)-b_{i}\right)\right.\\
&\qquad+\sigma_{i}\sum\nolimits_{j \in \mathcal{N}_{i}^{\lambda}} w_{i,j}\left(2(\tilde z_i^k-\tilde z_{j}^k)-(z_i^k-z_{j}^k)\right)\\
&\qquad-\sigma_{i}\sum\nolimits_{j \in \mathcal{N}_{i}^{\lambda}} \left.w_{i,j}(\lambda_i^k-\lambda_{j}^k)\right]\\
&x_{i}^{k+1}=(1-\delta)x_i^k+\delta\tilde x_i^k\\
&z_{i}^{k+1}=(1-\delta)z_i^k+\delta\tilde z_i^k\\
&\lambda_{i}^{k+1}=(1-\delta)\lambda_i^k+\delta\tilde \lambda_i^k\\
\end{aligned}$$
\end{algorithm}

Since the distribution of the random variable is unknown, in the algorithm we have replaced the expected value with a sample average approximation (SAA). 
We assume to have access to a pool of i.i.d. sample realizations of the random variable collected, for all $k\in\NN$ and for each agent $i\in\mc I$, in the vectors $\xi_i^k$. At each time $k$, we have\vspace{-.15cm}
\vspace{-.15cm}\begin{equation}\label{approx_SAA}
\hat F_i(x_i^k,\boldsymbol x_{-i}^k,\xi_i^k):=\frac{1}{N_k}\sum_{s=1}^{N_k}\nabla_{x_i} J_i(x_i^k,\boldsymbol x_{-i}^k,\xi_i^{(s)})
\vspace{-.15cm}\end{equation}
where $N_k$ is the batch size. As usual in SAA, we assume that the batch size increases over time according to the following lower bound. 
\begin{assumption}\label{ass_batch}
There exist $c,k_0,a>0$ such that, for all $k\in\NN$, $N_k\geq c(k+k_0)^{a+1}$.
\fineass\end{assumption}
We define the distance of the expected value and its approximation as $\epsilon^k=\hat F(\boldsymbol x^k,\xi^k)-\FF(\boldsymbol x^k)$.\\
Let us define the filtration $\mc F=\{\mc F_k\}$, that is, a family of $\sigma$-algebras such that $\mathcal{F}_{0} = \sigma\left(X_{0}\right)$, for all $k \geq 1$, 
$\mathcal{F}_{k} = \sigma\left(X_{0}, \xi_{1}, \xi_{2}, \ldots, \xi_{k}\right)$
and $\mc F_k\subseteq\mc F_{k+1}$ for all $k\geq0$. $\mc F$ collects the informations that each agent has at the beginning of iteration $k$. We note that the process $\epsilon_k$ is adapted to $\mc F$ and, therefore, satisfies the following assumption.
\begin{assumption}
For al $k\geq 0$, $\EEk{\epsilon_k}=0$ a.s.
\fineass\end{assumption}
The stochastic error has a vanishing second moment.
\begin{assumption}\label{ass_error}
For all $k$ and $C>0$, the stochastic error is such that $\EE[\|\epsilon^k\||\mc F_k]\leq C\sigma^2/N_k$.\fineass
\end{assumption}
Such a bound for the stochastic error can be obtained as a consequence of some milder assumptions that lay outside the scope of this work. We refer to \cite[Lem. 4.2]{staudigl2019}, \cite[Lem. 3.12]{iusem2017} for more details.

Furthermore, we assume that the step sizes are small enough as formalized next.
\begin{standassumption}\label{ass_phi}
Let $\delta\in(0,1]$. Let $d_i=\sum_{j=1}^{N} w_{i,j}$ and define $d^*=\max_{i\in \mc N}\{d_i\}$. Let 
%\vspace{-.15cm}\begin{equation}\label{beta}
$0<\beta \leq\min \left\{\frac{1}{2 d^{*}}, \frac{\eta}{\ell^{2}}\right\},$
%\vspace{-.15cm}\end{equation}
where $\eta$ and $\ell$ are respectively the strongly monotone and the Lipschitz constants as in Standing Assumption \ref{ass_F}. The parameter $\tau$ is positive and $\tau<\frac{1}{2\beta}$. The step sizes $\alpha$, $\nu$ and $\sigma$ satisfy, for any agent $i$ 
\vspace{-.15cm}\begin{equation}\label{parameters_phi}
\begin{aligned}
0&<\alpha_{i} \leq\left(\max _{j=1, \ldots, n_{i}}\{\sum\nolimits_{k=1}^{m}|[A_{i}^{T}]_{j k}|\}+\tau\right)^{-1} \\ 
0&<\nu_{i} \leq\left(2 d_{i}+\tau\right)^{-1}\\
0&<\sigma_{i} \leq\left(\max _{j=1, \ldots, m}\{\sum\nolimits_{k=1}^{k_{i}}|[A_{i}]_{j k}|\}+2 d_{i}+\tau\right)^{-1}\\
\end{aligned}
\vspace{-.15cm}\end{equation}
where $[A_i^\top]_{jk}$ indicates the entry $(j,k)$ of the matrix $A_i^\top$. \fineass

%The parameter $\tau$ is positive and $\tau<\frac{1}{2\beta}$ with
%\vspace{-.15cm}\begin{equation}\label{beta}
%0<\beta \leq\min \left\{\frac{1}{2 d^{*}}, \frac{\eta}{\ell^{2}}\right\}
%\vspace{-.15cm}\end{equation}
%where $d^*=\max_{i\in \mc N}\{d_i\}$ and $\eta$ and $\ell$ are respectively the strongly monotone and the Lipschitz constants.\fineass

\end{standassumption}
%An insight on these conditions is given in Section \ref{sec:conv}. 
Specifically, the choice of $\beta$ will be clear after Lemma \ref{propertiesAB}. Besides the fact that the constant $\beta$ can be hard to compute, we note that the step sizes can be taken constant and each agent can independently choose its own.

We are now ready to state our convergence result.
\begin{theorem}\label{theo_conv}
Let Assumptions \ref{ass_batch} and \ref{ass_error} hold. Then, the sequence generated by Algorithm \ref{DSFB_ave} with $\hat F_i$ as in (\ref{approx_SAA}) for all $i\in\mc N$ converges a.s. to a v-GNE of the game in (\ref{game}).
\end{theorem}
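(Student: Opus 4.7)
The plan is to reduce Theorem~\ref{theo_conv} to an application of a known convergence result for the damped (Krasnosel'ski\u{\i}-type) stochastic forward--backward iteration in a suitably weighted Hilbert space. First, I would rewrite the KKT system $0\in\mc T(\bs x,\bs\lambda)$ in consensus form by augmenting the dual variable with the auxiliary variable $\bs z$, following the template of \cite{yi2019}: exploiting Lemma~\ref{var_eq}, v-GNE corresponds to $\lambda_i\equiv\lambda^*$, which via the Laplacian $L$ and the auxiliary vector $\bs z$ can be encoded as $L\bs\lambda=0$ together with $\sum_i A_i x_i-b+L\bs z\in -\mathrm N_{\RR_{\geq0}^{m}}(\bs\lambda)$. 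This gives an extended inclusion $0\in\mc A(\bs\omega)+\mc B(\bs\omega)$ with $\bs\omega=\op{col}(\bs x,\bs z,\bs\lambda)$, where $\mc A$ collects the subdifferential/normal-cone and skew-symmetric coupling terms (maximally monotone) and $\mc B$ contains $\FF$ and the Laplacian blocks (single-valued, monotone, Lipschitz).

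Next, I would show that Algorithm~\ref{DSFB_ave} is exactly a preconditioned damped stochastic forward--backward iteration
\vspace{-.1cm}\begin{equation*}
\bs\omega^{k+1}=(1-\delta)\bs\omega^k+\delta\,(\op{Id}+\Phi^{-1}\mc A)^{-1}(\op{Id}-\Phi^{-1}\hat{\mc B}_k)(\bs\omega^k),
\vspace{-.1cm}\end{equation*}
for a block-diagonal preconditioner $\Phi$ built from $\alpha_i,\nu_i,\sigma_i$ and the matrices $A_i$, where $\hat{\mc B}_k$ replaces $\FF$ by the SAA estimator $\hat F(\bs x^k,\xi^k)$. The choice of step sizes in Standing Assumption~\ref{ass_phi} is exactly what makes $\Phi\succ\tau I$ (by a Gershgorin-type argument on the block structure), so that the $\Phi$-induced inner product is well defined. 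The bound $\beta\leq \eta/\ell^{2}$ together with the structure of $\mc B$ then yields that $\Phi^{-1}\mc B$ is $\beta$-cocoercive in the $\Phi$-norm, while $\Phi^{-1}\mc A$ remains maximally monotone; this is the preconditioning lemma (foreshadowed as Lemma~\ref{propertiesAB}) that unlocks the convergence theory.

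Given this operator-theoretic reformulation, I would invoke the almost-sure convergence result for the damped stochastic forward--backward with inexact oracle in \cite{rosasco2016}. Its hypotheses are: (i) cocoercivity of $\Phi^{-1}\mc B$, (ii) maximal monotonicity of $\Phi^{-1}\mc A$, (iii) zero-mean stochastic error $\EEk{\epsilon_k}=0$, and (iv) summability of the conditional second moments, $\sum_k \EEk{\|\epsilon^k\|^2}<\infty$ a.s. Items (i)--(iii) are already in place. For (iv), Assumption~\ref{ass_error} gives $\EEk{\|\epsilon^k\|^2}\leq C\sigma^2/N_k$, and Assumption~\ref{ass_batch} gives $N_k\geq c(k+k_0)^{a+1}$, so the series is dominated by $\sum_k (k+k_0)^{-(a+1)}<\infty$ since $a>0$. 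Hence the iterates converge a.s. to a zero of $\mc A+\mc B$, which by construction is a v-GNE of \eqref{game}.

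I expect two issues to require the most care. The first is the derivation of the precise splitting $\mc A+\mc B$ and the verification that its resolvent under $\Phi$ is computed block-wise in exactly the order $(\tilde x_i^k,\tilde z_i^k,\tilde\lambda_i^k)$ that Algorithm~\ref{DSFB_ave} implements, including the ``$2\tilde x_i^k-x_i^k$'' and ``$2(\tilde z_i^k-\tilde z_j^k)-(z_i^k-z_j^k)$'' terms that arise from the triangular structure of $\Phi$; this is essentially bookkeeping but must be done carefully to match the algorithm exactly. The second, and main, obstacle is to propagate the stochastic error through the resolvent step to obtain a stochastic quasi-Fej\'er inequality in the $\Phi$-norm, i.e.\ $\EEk{\|\bs\omega^{k+1}-\bs\omega^*\|_\Phi^2}\leq \|\bs\omega^k-\bs\omega^*\|_\Phi^2-\gamma\,R_k+U_k$ with $\sum_k U_k<\infty$ a.s.\ and $R_k\geq 0$; the Robbins--Siegmund lemma then yields both summability of the residuals and a.s.\ convergence of the Fej\'er sequence, from which a.s.\ convergence to a single v-GNE follows via the standard cluster-point argument combined with strong monotonicity of $\FF$.
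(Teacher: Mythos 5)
Your proposal takes essentially the same route as the paper: augment the KKT inclusion with the auxiliary consensus variable $\bs z$ and the Laplacian, recognize Algorithm~\ref{DSFB_ave} as a preconditioned damped forward--backward iteration, establish cocoercivity of the preconditioned single-valued part and maximal monotonicity of the set-valued part (Lemmas~\ref{propertiesAB} and~\ref{op_phi}), verify summability of the SAA error from $N_k\geq c(k+k_0)^{a+1}$, and invoke the almost-sure convergence theorem of \cite{rosasco2016} together with Lemma~\ref{lemma_zeri} --- which is exactly the paper's argument, up to your swapping of the labels $\mc A$ and $\mc B$. One correction: the preconditioner $\Phi$ in \eqref{phi} is \emph{not} block-diagonal --- its off-diagonal blocks $-{\bf A}^{\top}$ and $-{\bf L}$ are precisely what make $\Phi+\bar{\mc B}$ lower block-triangular, hence the resolvent explicitly computable in the order $(\tilde{\boldsymbol x},\tilde{\boldsymbol z},\tilde{\boldsymbol\lambda})$ and the source of the $2\tilde x_i^k-x_i^k$ reflection terms, as your own later remark about the ``triangular structure of $\Phi$'' implicitly acknowledges; a genuinely block-diagonal $\Phi$ would not yield a distributed algorithm.
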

\begin{proof}
See Section \ref{sec:conv}. 
\end{proof}

\section{Convergence analysis}\label{sec:conv}
\subsection{Preconditioned forward backward operator splitting}\label{sec_pFB}
In this section, we prove convergence of Algorithm \ref{DSFB_ave}. %Let Assumptions \ref{ass_batch} - \ref{ass_phi} hold. %We start with the properties of the operators $\Phi^{-1}\bar{\mc A}$ and $\Phi^{-1}\bar{\mc B}$. }

We first note that the mapping $\mc T$ can be written as the sum of two operators. Specifically, $\mc T = \mc A + \mc B$ where
\vspace{-.15cm}\begin{equation}\label{op}
\begin{aligned}
\mc{A} &:\left[\begin{array}{l}
\boldsymbol x \\
\boldsymbol\lambda
\end{array}\right] \hspace{-.1cm}\mapsto\hspace{-.1cm}\left[\begin{array}{c}
\FF(\boldsymbol x) \\ 
b
\end{array}\right]\\
\mc{B} &:\left[\begin{array}{l}
\boldsymbol x \\ 
\boldsymbol\lambda
\end{array}\right] \hspace{-.1cm}\mapsto\hspace{-.1cm}\left[\begin{array}{c}
\op{N}_{\Omega}(\boldsymbol x) \\ 
\op{N}_{\RR_{ \geq 0}^{m}}(\lambda)
\end{array}\right]+\left[\begin{array}{cc}
0 & A^{\top} \\ 
-A & 0
\end{array}\right]\left[\begin{array}{l}
\boldsymbol x \\ 
\boldsymbol\lambda
\end{array}\right].
\end{aligned}
\vspace{-.15cm}\end{equation}
%The formulation $\mc T = \mc A + \mc B$ is called splitting of $\mc T$ \cite{bau2011}. 
We note that finding a solution of the variational SGNEP translates in finding $x^*\in\op{zer}(\mc A+\mc B)$. 

Let $L$ be the Laplacian matrix of $\mc G_\lambda$  and set ${\bf{L}}=L\otimes \op{I}_m\in\RR^{Nm\times Nm}$.
To impose consensus on the dual variables, the authors in \cite{yi2019} proposed the Laplacian constraint ${\bf{L}}\bs\lambda=0$. Then, to preserve monotonicity one can augment the two operators $\mc A$ and $\mc B$ introducing the auxiliary variable $\bs z$. Define ${\bf{A}}=\op{diag}\{A_1,\dots,A_N\}\in\RR^{Nm\times n}$ and $\boldsymbol\lambda=\op{col}(\lambda_1,\dots,\lambda_N)\in\RR^{Nm}$ and similarly let us define $\bar b$ of suitable dimensions.
Then, we introduce
\vspace{-.15cm}\begin{equation}\label{extended_op}
\begin{aligned}
\bar{\mc{A}} &:\left[\hspace{-.1cm}\begin{array}{l}
\boldsymbol x \\
\boldsymbol z\\
\boldsymbol \lambda
\end{array}\hspace{-.1cm}\right] \hspace{-.1cm}\mapsto\hspace{-.1cm}\left[\begin{array}{c}
\FF(\boldsymbol x) \\ 
0\\
\bar b
\end{array}\right]+\left[\begin{array}{c}
0\\
0\\
{\bf{L}}\boldsymbol\lambda
\end{array}\right]\\
\bar{\mc{B}} &:\left[\hspace{-.1cm}\begin{array}{l}
\boldsymbol x \\ 
\boldsymbol z\\
\boldsymbol \lambda
\end{array}\hspace{-.1cm}\right] \hspace{-.1cm}\mapsto\hspace{-.1cm}\left[\begin{array}{c}
\op{N}_{\Omega}(\boldsymbol x) \\ 
{\bf{0}}_{mN}\\
\hspace{-.1cm}\op{N}_{\RR_{ \geq 0}^{m}}(\boldsymbol \lambda)\hspace{-.1cm}
\end{array}\right]\hspace{-.1cm}+\hspace{-.1cm}\left[\hspace{-.1cm}\begin{array}{ccc}
0 & 0 & {\bf{A}}^{\top} \\ 
0 & 0 & {\bf{L}}\\
-{\bf{A}} & -{\bf{L}} & 0
\end{array}\hspace{-.1cm}\right]\hspace{-.2cm}\left[\hspace{-.1cm}\begin{array}{l}
\boldsymbol x \\ 
\boldsymbol z\\
\boldsymbol \lambda
\end{array}\hspace{-.1cm}\right]\hspace{-.1cm}.
\end{aligned}
\vspace{-.15cm}\end{equation}
From now on, let us indicate $\boldsymbol\omega=\op{col}(\boldsymbol x,\boldsymbol z,\boldsymbol \lambda)$.
The operators $\bar{\mc A}$ and $\bar{\mc B}$ in \eqref{extended_op} have the following properties.

\begin{lemma}\label{propertiesAB}
The operator $\bar{\mc B}$ is maximally monotone and $\bar{\mc A}$ is $\beta$-cocoercive with $\beta$ as in Standing Assumption \ref{ass_phi}, i.e., $\langle \bar{\mc A}(x)-\bar{\mc A}(y),x-y\rangle \geq \beta\|\bar{\mc A}(x)-\bar{\mc A}(y)\|^{2}.$
\end{lemma}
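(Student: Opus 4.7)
The plan is to treat the two claims separately, exploiting the block structure of $\bar{\mc A}$ and $\bar{\mc B}$ in \eqref{extended_op}.

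\textbf{Maximal monotonicity of $\bar{\mc B}$.} I would split $\bar{\mc B}$ as $\bar{\mc B}_1+\bar{\mc B}_2$, where
\[
\bar{\mc B}_1(\bs\omega)=\begin{bmatrix}\op{N}_\Omega(\bs x)\\ \{\bf 0\}_{mN}\\ \op{N}_{\RR^m_{\geq 0}}(\bs\lambda)\end{bmatrix},\qquad
\bar{\mc B}_2(\bs\omega)=\begin{bmatrix}0 & 0 & {\bf A}^\top\\ 0 & 0 & {\bf L}\\ -{\bf A} & -{\bf L} & 0\end{bmatrix}\bs\omega.
\]
The operator $\bar{\mc B}_1$ is the normal cone of the closed convex set $\bs\Omega\times\RR^{mN}\times\RR^{mN}_{\geq 0}$ and is therefore maximally monotone. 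Since ${\bf L}^\top={\bf L}$, the matrix defining $\bar{\mc B}_2$ is skew-symmetric, so $\bar{\mc B}_2$ is a bounded linear monotone operator with full domain, hence maximally monotone. Because $\op{dom}(\bar{\mc B}_2)=\RR^{n+2mN}$, the sum $\bar{\mc B}=\bar{\mc B}_1+\bar{\mc B}_2$ is maximally monotone by the standard sum rule.

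\textbf{Cocoercivity of $\bar{\mc A}$.} Writing $\bar{\mc A}(\bs\omega)=\op{col}(\FF(\bs x),0,\bar b+{\bf L}\bs\lambda)$, the constant term $\bar b$ drops out of every increment, and for two points $\bs\omega_1,\bs\omega_2$ I compute
\[
\langle\bar{\mc A}(\bs\omega_1)-\bar{\mc A}(\bs\omega_2),\bs\omega_1-\bs\omega_2\rangle
=\langle\FF(\bs x_1)-\FF(\bs x_2),\bs x_1-\bs x_2\rangle+\langle{\bf L}(\bs\lambda_1-\bs\lambda_2),\bs\lambda_1-\bs\lambda_2\rangle,
\]
\[
\|\bar{\mc A}(\bs\omega_1)-\bar{\mc A}(\bs\omega_2)\|^2=\|\FF(\bs x_1)-\FF(\bs x_2)\|^2+\|{\bf L}(\bs\lambda_1-\bs\lambda_2)\|^2.
\]
For the first block, combining $\eta$-strong monotonicity and $\ell$-Lipschitz continuity of $\FF$ (Standing Assumption \ref{ass_F}) gives
\[
\langle\FF(\bs x_1)-\FF(\bs x_2),\bs x_1-\bs x_2\rangle\geq\eta\|\bs x_1-\bs x_2\|^2\geq\tfrac{\eta}{\ell^2}\|\FF(\bs x_1)-\FF(\bs x_2)\|^2,
\]
so $\FF$ is $(\eta/\ell^2)$-cocoercive. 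For the second block, ${\bf L}=L\otimes\op{I}_m$ is symmetric positive semidefinite, and the Gershgorin bound on the Laplacian gives $\|{\bf L}\|\leq 2d^*$; hence, using $\langle {\bf L}y,y\rangle=\|{\bf L}^{1/2}y\|^2\geq\|{\bf L}\|^{-1}\|{\bf L}y\|^2$, the operator $\bs\lambda\mapsto{\bf L}\bs\lambda$ is $(1/(2d^*))$-cocoercive. Summing the two inequalities and using $\beta\leq\min\{\eta/\ell^2,\,1/(2d^*)\}$ from Standing Assumption \ref{ass_phi} yields
\[
\langle\bar{\mc A}(\bs\omega_1)-\bar{\mc A}(\bs\omega_2),\bs\omega_1-\bs\omega_2\rangle\geq\beta\,\|\bar{\mc A}(\bs\omega_1)-\bar{\mc A}(\bs\omega_2)\|^2,
\]
which is the required cocoercivity.

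\textbf{Main obstacle.} The only non-routine step is pinning down the cocoercivity constant of ${\bf L}$: one has to justify both the PSD-inequality $\langle {\bf L}y,y\rangle\geq\|{\bf L}\|^{-1}\|{\bf L}y\|^2$ (via the spectral decomposition, noting that ${\bf L}y$ lies in the range of ${\bf L}$) and the bound $\|{\bf L}\|\leq 2d^*$ on the Laplacian spectral radius. Everything else follows from standard convex-analytic sum rules and the explicit choice of $\beta$ in Standing Assumption \ref{ass_phi}.
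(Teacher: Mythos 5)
Your proof is correct and is essentially the argument behind the paper's one-line proof, which simply cites \cite[Lem.~5]{yi2019}: there too, $\bar{\mc B}$ is split into the normal cone of $\bs\Omega\times\RR^{mN}\times\RR^{mN}_{\geq0}$ plus a skew-symmetric (hence maximally monotone, full-domain) linear map, and the cocoercivity of $\bar{\mc A}$ is obtained blockwise from the $(\eta/\ell^2)$-cocoercivity of $\FF$ and the $(1/(2d^*))$-cocoercivity of ${\bf L}$, which is exactly where the bound $\beta\leq\min\{1/(2d^*),\eta/\ell^2\}$ in Standing Assumption \ref{ass_phi} comes from. Your write-up just makes explicit the two routine facts the reference leaves implicit (the PSD inequality $\langle{\bf L}y,y\rangle\geq\norm{{\bf L}}^{-1}\normsq{{\bf L}y}$ and the Gershgorin bound $\norm{{\bf L}}\leq 2d^*$), so no substantive difference.
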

\begin{proof}
It follows from \cite[Lem. 5]{yi2019}.
\end{proof}
%\begin{proof}
%Set $\bar {\mc B}=\mc B_1+\mc B_2$. The normal cone is maximally monotone \cite[Ex 20.26]{bau2011} and so is the vector ${\bf{0}}_{mN}$. Then, $\mc B_1$ is maximally monotone. The second addend is a skew symmetric matrix and $\op{dom}\mc B_2=\RR^{n+2mN}$, therefore $\mc B_2$ is maximally monotone \cite[Ex 20.35]{bau2011}. It follows that $\bar{\mc B}$ is maximally monotone \cite[Proposition 20.23]{bau2011}. \\
%The operator $\bar{\mc A}$ is cocoercive since it holds that
%$$\begin{aligned}
%\langle \FF(\boldsymbol x_{1})-\FF(\boldsymbol x_{2}), \boldsymbol x_{1}-\boldsymbol x_{2}\rangle&\geq\eta\normsq{\boldsymbol x_1-\boldsymbol x_2}\\
%&\geq\frac{\eta}{\ell^2}\normsq{\FF(\boldsymbol x_{1})-\FF(\boldsymbol x_{2})}.\\
%\end{aligned}$$
%Moreover, since the eigenvalue of ${\bf{L}}$ are $\op{col}(0,s_2,\dots,s_N)\otimes \mathbf{1}_m$, ${\bf{L}}\boldsymbol\lambda+\bar b$ is $\norm{L}$-Lipschitz continuous and by the Baillon-Haddad Theorem 
%$$\langle{\bf{L}} \boldsymbol \lambda_{1}-{\bf{L}} \boldsymbol \lambda_{2}, \boldsymbol \lambda_{1}-\boldsymbol \lambda_{2}\rangle \geq \frac{1}{\norm{L}}\normsq{{\bf{L}} \boldsymbol \lambda_{1}-{\bf{L}} \boldsymbol \lambda_{2}}.$$
%Since $\norm{L}\leq s_N$ and $d^*\leq s_N\leq 2d^*$, $\norm{L}\leq2d^*$. In conclusion taking $\beta\in(0,\min\{1/2d^*,\eta/\ell^2\}]$ we have that $\bar {\mc A}$ is $\beta$-cocoercive.
%\end{proof}
Notice that this result explains the choice of the parameter $\beta$ in Assumption \ref{ass_phi}.
The following lemma shows that the points $\bs\omega\in\op{zer}(\bar{\mc A}+\bar{\mc B})$ provide a v-SGNE.

\begin{lemma}\label{lemma_zeri}
Consider the operators $\bar{\mc A}$ and $\bar{\mc B}$ in (\ref{extended_op}), and the operators $\mc A$ and $\mc B$ in (\ref{op}). Then the following hold.
\begin{itemize}
\item[(i)] Given any $\boldsymbol\omega^* \in \op{zer}(\bar{\mc A}+\bar{\mc B})$, $\boldsymbol{x}^{*}$ is a v-GNE of game in (\ref{game}), i.e., $\boldsymbol x^*$ solves the $\SVI(\mc X, \FF)$ in (\ref{eq_vi}). Moreover $\boldsymbol \lambda^{*}=\mathbf{1}_{N} \otimes \lambda^{*},$ and $(\boldsymbol x^*,\lambda^{*})$ satisfy the KKT condition in (\ref{VI_KKT}) i.e., $\op{col}(\boldsymbol x^*, \lambda^{*}) \in \op{zer}(\mc A+\mc B)$
\item[(ii)] $\op{zer}(\mc A+\mc B) \neq \emptyset$ and $\op{zer}(\bar{\mc A}+\bar{\mc B}) \neq \emptyset$
\end{itemize}
\end{lemma}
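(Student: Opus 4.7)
The plan is to expand the inclusion $0\in(\bar{\mc A}+\bar{\mc B})(\boldsymbol\omega^*)$ block by block and to exploit the spectral structure of the augmented Laplacian ${\bf{L}}=L\otimes\op{I}_m$. Writing the three blocks out using \eqref{extended_op} gives
\begin{align*}
0 &\in \FF(\boldsymbol x^*)+\op{N}_{\Omega}(\boldsymbol x^*)+{\bf{A}}^{\top}\boldsymbol\lambda^*,\\
0 &= {\bf{L}}\boldsymbol\lambda^*,\\
0 &\in \bar b+{\bf{L}}\boldsymbol\lambda^*+\op{N}_{\RR_{\geq 0}^{Nm}}(\boldsymbol\lambda^*)-{\bf{A}}\boldsymbol x^*-{\bf{L}}\boldsymbol z^*.
\end{align*}
By Standing Assumption \ref{ass_graph} the graph $\mc G_\lambda$ is connected, so $\ker({\bf{L}})=\op{span}\{\uno_N\}\otimes\RR^m$; the second block therefore forces $\boldsymbol\lambda^*=\uno_N\otimes\lambda^*$ for some $\lambda^*\in\RR_{\geq 0}^m$ (non-negativity is inherited from the normal-cone inclusion in the third block). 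Substituting this consensus into the first block and using ${\bf{A}}^{\top}(\uno_N\otimes\lambda^*)=\op{col}(A_i^{\top}\lambda^*)_{i\in\mc I}$ recovers the first KKT equation of \eqref{VI_KKT} coordinate-wise.

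Next, the plan is to premultiply the third block by $\uno_N^{\top}\otimes\op{I}_m$. Since $\uno_N^{\top}L=0$, the term ${\bf{L}}\boldsymbol z^*$ vanishes; $\bar b$ telescopes to $b$ and ${\bf{A}}\boldsymbol x^*$ telescopes to $A\boldsymbol x^*$ by construction; and the aggregate of $\op{N}_{\RR_{\geq 0}^m}$ at the consensus point $\lambda^*$ coincides with $\op{N}_{\RR_{\geq 0}^m}(\lambda^*)$ because $\op{N}_{\RR_{\geq 0}^m}$ is a convex cone. The surviving inclusion is exactly the second KKT equation of \eqref{VI_KKT}. Combining with the first block, $\op{col}(\boldsymbol x^*,\lambda^*)\in\op{zer}(\mc A+\mc B)$; Lemma \ref{var_eq} then promotes $\boldsymbol x^*$ to a v-GNE of \eqref{game}, concluding (i).

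For (ii), existence of $\op{zer}(\mc A+\mc B)$ follows by chaining Standing Assumptions \ref{ass_X} and \ref{ass_F}: strong monotonicity of $\FF$ combined with Slater's qualification yields a solution of $\SVI(\mc X,\FF)$ by \cite[Th. 2.3.3]{facchinei2007}, and Lemma \ref{var_eq} converts this solution and its multiplier into an element of $\op{zer}(\mc A+\mc B)$. To obtain a point of $\op{zer}(\bar{\mc A}+\bar{\mc B})$, the plan is to lift such $(\boldsymbol x^*,\lambda^*)$ by setting $\boldsymbol\lambda^*=\uno_N\otimes\lambda^*$ and then solving the third block for $\boldsymbol z^*$. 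The main obstacle is precisely this reverse lifting: one must select per-agent normal-cone vectors summing to the aggregate $\op{N}_{\RR_{\geq 0}^m}(\lambda^*)$ element given by the KKT condition, and argue that the resulting per-agent residual $\bar b+(\text{local normal})-{\bf{A}}\boldsymbol x^*$ lies in $\op{range}({\bf{L}})=(\uno_N\otimes\op{I}_m)^{\perp}$, so that a suitable $\boldsymbol z^*$ exists. This is possible because its aggregate vanishes by the KKT condition for $(\boldsymbol x^*,\lambda^*)$, and it is the standard augmented-dual construction used in \cite{yi2019}, which can be invoked directly.
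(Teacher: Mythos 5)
Your argument is correct and is essentially a reconstruction of the proof the paper delegates entirely to \cite[Th.~2]{yi2019}: the block-wise expansion of $0\in(\bar{\mc A}+\bar{\mc B})(\boldsymbol\omega^*)$, the use of $\ker(\mathbf{L})=\op{span}\{\uno_N\}\otimes\RR^m$ (connectedness) to force dual consensus, the left-multiplication by $\uno_N^\top\otimes\op{I}_m$ to collapse the third block onto the aggregate KKT condition, and the reverse lifting via $\op{range}(\mathbf{L})=\ker(\mathbf{L})^\perp$ for part (ii) are exactly the steps of that reference. One small attribution slip: passing from $\op{col}(\boldsymbol x^*,\lambda^*)\in\op{zer}(\mc A+\mc B)$ to ``$\boldsymbol x^*$ solves the $\SVI(\mc X,\FF)$'' rests on sufficiency of the KKT conditions under convexity and Slater's qualification (Standing Assumptions \ref{ass_cost} and \ref{ass_X}), not on Lemma \ref{var_eq}, which only relates SVI solutions to SGNE; the gap is standard and immediately filled.
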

\begin{proof}
It follows from \cite[Th. 2]{yi2019}.
%The result can be proved similarly to \cite[Th. 2]{yi2019}. Writing explicitly $\boldsymbol\omega^*\in\op{zer}(\bar{\mc A}+\bar{\mc B})$ and using properties of the Laplacian and of the normal cone, we obtain exactly the KKT condition in \eqref{VI_KKT}. Then, by Lemma \ref{var_eq}, $(\boldsymbol x^*,\lambda^*)$ satisfy \eqref{game_kkt}, $(\boldsymbol x^*,\lambda^*)\in\op{zer}(\mc A+\mc B)$ and $x^*$ is an equilibrium of the game in \eqref{game}. The converse follows showing that there exist an auxiliary variable $z$, given $(\boldsymbol x^*,\lambda^*)\in\op{zer}(\mc A+\mc B)$, using the properties of the normal cone of the dual variable.
\end{proof}

Unfortunately, the operator $\bar{\mc B}$ is monotone but not cocoercive, due to the skew symmetric matrix, hence, we cannot directly apply the FB operator splitting \cite[\S 26.5]{bau2011}. To overcome this issue, the authors in \cite{yi2019} introduced a preconditioning matrix $\Phi$. Thanks to $\Phi$, the zeros of the mapping $\mc T$ correspond to the fixed point of a specific operator that depends on the operators $\bar{\mc A}$ and $\bar{\mc B}$ as exploited in \cite{belgioioso2018,yi2019}. Indeed, it holds that, for any matrix $\Phi\succ0$, $\boldsymbol\omega\in\op{zer}(\bar{\mc A}+\bar{\mc B})$ if and only if, 
\vspace{-.15cm}\begin{equation}\label{FB_fix}
\boldsymbol\omega=\op{J}_{\Phi^{-1}\bar{\mc B}}(\op{Id}-\Phi^{-1}\bar{\mc A})(\boldsymbol\omega)
\vspace{-.15cm}\end{equation}
where $\op{J}_{\Phi^{-1}\bar{\mc B}}=(\mathrm{Id}+\Phi^{-1} \bar{\mc B})^{-1}$ is the resolvent of $\bar{\mc B}$ and represent the backward step and $(\mathrm{Id}-\Phi^{-1} \bar{\mc A})$ is the (stochastic) forward step.
In the deterministic case, convergence of the FB splitting is guaranteed by \cite[Section 26.5]{bau2011}. In the stochastic case, the FB algorithm, as it is in (\ref{FB_fix}), is known to converge for strongly monotone mappings \cite{xu2013}. For this reason, we focus on the following damped FB algorithm 
\vspace{-.15cm}\begin{equation}\label{FB_ave}
\begin{cases}
\tilde{\boldsymbol\omega}_k=\op{J}_{\Phi^{-1}\bar{\mc B}}(\mathrm{Id}-\Phi^{-1} \bar {\mc A})(\boldsymbol\omega_k)\\
\boldsymbol\omega_{k+1}=(1-\delta_k)\boldsymbol\omega_k+\delta_k \tilde{\boldsymbol\omega}_k
\end{cases}
\vspace{-.15cm}\end{equation}
that converges with cocoercivity of $\Phi^{-1}\bar{\mc A}$ \cite{rosasco2016}. We show that this is true in Lemma \ref{op_phi}. 

First, we show that (\ref{FB_ave}) is equivalent to Algorithm \ref{DSFB_ave}. 
%As spoiled in the introduction, in the stochastic case, we need to take an approximation of the expected value. At this stage, it is not important if we use sample average or stochastic approximation, therefore, in what follows, we replace $\bar{\mc A}$ with 
%$$\hat{\mc A}:\left[\begin{array}{c}
%\bf{x} \\
%z\\
%\lambda
%\end{array}\right] \mapsto\left[\begin{array}{c}
%\hat J(\boldsymbol x,\xi) \\ 
%0\\
%b
%\end{array}\right]$$
%where $\tilde F$ is an approximation of the expected value mapping $F$ in (\ref{eq_sub_ee}) given the random vector $\xi$. 
Note that, if we write the resolvent explicitly, the first step of Equation (\ref{FB_ave}) can be rewritten as
\vspace{-.15cm}\begin{equation}\label{inclusion}
-\mc{\bar A}(\boldsymbol\omega^{k}) \in \bar{\mc{B}}(\tilde{\boldsymbol\omega}^{k})+\Phi(\tilde{\boldsymbol\omega}^{k}-\boldsymbol\omega^{k}).
\vspace{-.15cm}\end{equation}
The matrix $\Phi$ should be symmetric, positive definite and such that $\boldsymbol\omega_k$ is easy to be computed \cite{belgioioso2018}. 

We define $\alpha^{-1}=\op{diag}\{\alpha_1^{-1}\op{I}_{n_1},\dots,\alpha_N^{-1}\op{I}_{n_N}\}\in\RR^{n\times n}$ and similarly $\sigma^{-1}$ and $\nu^{-1}$ of suitable dimensions. Let
\vspace{-.15cm}\begin{equation}\label{phi}
\Phi=\left[\begin{array}{ccc}
\alpha^{-1} & 0 & -{\bf{A}}^\top\\
0 & \nu^{-1} & -{\bf{L}}\\
-{\bf{A}} & -{\bf{L}} & \sigma^{-1}
\end{array}\right]
\vspace{-.15cm}\end{equation}
and suppose that the parameters $\alpha_i$, $\nu_i$ and $\sigma_i$ satisfy \eqref{parameters_phi} in Standing Assumption \ref{ass_phi}. 
%Since it has to be positive definite, we assume that the parameters $\alpha$, $\nu$ and $\sigma$ satisfy, for any argent $i$ and $\tau>0$ 
%\vspace{-.15cm}\begin{equation}\label{parameters_phi}
%\begin{aligned}
%0&<\alpha_{i} \leq\left(\max _{j=1, \ldots, n_{i}}\{\sum\nolimits_{k=1}^{m}|[A_{i}^{T}]_{j k}|\}+\tau\right)^{-1} \\ 
%0&<\nu_{i} \leq\left(2 d_{i}+\tau\right)^{-1}\\
%0&<\sigma_{i} \leq\left(\max _{j=1, \ldots, m}\{\sum\nolimits_{k=1}^{k_{i}}|[A_{i}]_{j k}|\}+2 d_{i}+\tau\right)^{-1}\\
%\end{aligned}
%\vspace{-.15cm}\end{equation}
%where $[A_i^\top]_{jk}$ indicates the entry $(j,k)$ of the matrix $A_i^\top$ and $d_i$ is the weighted degree of agent $i$ in $\mc G_\lambda$. 

We can obtain conditions \eqref{parameters_phi} imposing $\Phi$ to be diagonally dominant. This, in combination with the fact that it is symmetric, implies that $\Phi$ is also positive definite. Then, the operators $\Phi^{-1}\bar{\mc A}$ and $\Phi^{-1}\bar{\mc B}$ satisfy the following properties under the $\Phi$-induced norm $\norm{\cdot}_\Phi$.
\begin{lemma}\label{op_phi}
Given $\bar{\mc A}$ and $\bar{\mc B}$ in (\ref{extended_op}) and $\beta$ and $\tau$ as in Standing Assumption \ref{ass_phi}, the following statements hold:
\begin{itemize}
\item[(i)] $\Phi^{-1}\bar{\mc A}$ is $\beta\tau$-cocoercive;
\item[(ii)] $\Phi^{-1}\bar{\mc B}$ is maximally monotone. 
\end{itemize}
\end{lemma}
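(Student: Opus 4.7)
The plan is to handle the two parts separately, leveraging the cocoercivity of $\bar{\mc A}$ and maximal monotonicity of $\bar{\mc B}$ established in Lemma \ref{propertiesAB}, together with a spectral lower bound on $\Phi$.

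For part (i), I start from the identity $\langle \Phi^{-1}\bar{\mc A}(\bs\omega)-\Phi^{-1}\bar{\mc A}(\bs\omega'),\bs\omega-\bs\omega'\rangle_{\Phi}=\langle \bar{\mc A}(\bs\omega)-\bar{\mc A}(\bs\omega'),\bs\omega-\bs\omega'\rangle$, which follows directly from the definition of the $\Phi$-induced inner product. By Lemma \ref{propertiesAB} the right-hand side is bounded below by $\beta\,\|\bar{\mc A}(\bs\omega)-\bar{\mc A}(\bs\omega')\|^{2}$. On the other hand, writing $v:=\bar{\mc A}(\bs\omega)-\bar{\mc A}(\bs\omega')$, we have $\|\Phi^{-1}\bar{\mc A}(\bs\omega)-\Phi^{-1}\bar{\mc A}(\bs\omega')\|_{\Phi}^{2}=\langle v,\Phi^{-1}v\rangle=\|v\|_{\Phi^{-1}}^{2}$. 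Thus the claim $\beta\tau$-cocoercivity in the $\Phi$-induced metric reduces to the matrix inequality $\Phi\succeq\tau\op{I}$, i.e. $\|v\|^{2}\geq\tau\|v\|_{\Phi^{-1}}^{2}$ for every $v$.

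The main obstacle, and the reason Standing Assumption \ref{ass_phi} is written the way it is, is establishing precisely $\Phi\succeq\tau\op{I}$. The strategy is to observe that $\Phi-\tau\op{I}$ is symmetric and, thanks to the $+\tau$ appearing in each of the three step-size bounds \eqref{parameters_phi}, it is diagonally dominant with non-negative diagonal entries; by Gershgorin it is therefore positive semi-definite. Combined with $\bar{\mc A}$ being $\beta$-cocoercive in the standard inner product, the chain $\langle\Phi^{-1}\bar{\mc A}(\bs\omega)-\Phi^{-1}\bar{\mc A}(\bs\omega'),\bs\omega-\bs\omega'\rangle_{\Phi}\geq\beta\|v\|^{2}\geq\beta\tau\|v\|_{\Phi^{-1}}^{2}=\beta\tau\|\Phi^{-1}\bar{\mc A}(\bs\omega)-\Phi^{-1}\bar{\mc A}(\bs\omega')\|_{\Phi}^{2}$ closes part (i).

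For part (ii), I use once more that $\langle \Phi^{-1}u-\Phi^{-1}u',\bs\omega-\bs\omega'\rangle_{\Phi}=\langle u-u',\bs\omega-\bs\omega'\rangle$ for any $u\in\bar{\mc B}(\bs\omega)$, $u'\in\bar{\mc B}(\bs\omega')$, so monotonicity of $\Phi^{-1}\bar{\mc B}$ in the $\Phi$-metric is inherited verbatim from monotonicity of $\bar{\mc B}$ in the standard one. Maximality is preserved because $\Phi\succ 0$ induces a linear homeomorphism between the ambient space with the two inner products, so $\op{graph}(\Phi^{-1}\bar{\mc B})$ cannot be properly extended without properly extending $\op{graph}(\bar{\mc B})$, contradicting its maximality. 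Alternatively, one can invoke \cite[Sec.~V]{yi2019} where exactly this construction is carried out. Since both parts thus reduce to transferring properties across the $\Phi$-induced metric and to the diagonal-dominance spectral estimate for $\Phi-\tau\op{I}$, no further heavy machinery is required.
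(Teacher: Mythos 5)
Your proof is correct and is essentially the argument the paper delegates to its citation of \cite[Lem.~7]{yi2019}: the step-size bounds in \eqref{parameters_phi} make $\Phi-\tau\op{I}$ symmetric and diagonally dominant, hence $\Phi\succeq\tau\op{I}$, and cocoercivity and maximal monotonicity then transfer to the $\Phi$-induced inner product exactly as you describe. No gaps; the paper itself offers no more detail than the reference.
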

\begin{proof}
It follows from \cite[Lem. 7]{yi2019}.
\end{proof}

\subsection{Stochastic sample average approximation}
Since the expected value can be hard to compute, we need to take an approximation. At this stage, it is not important if we use sample average or stochastic approximation, therefore, in what follows, we replace $\bar{\mc A}$ with \vspace{-.15cm}
$$\hat{\mc A}:\left[\begin{array}{c}
\boldsymbol x \\
\boldsymbol z\\
\boldsymbol\lambda
\end{array}\right] \mapsto\left[\begin{array}{c}
\hat F(\boldsymbol x,\xi) \\ 
0\\
\bar b
\end{array}\right]+\left[\begin{array}{c}
0\\
0\\
{\bf{L}}\boldsymbol\lambda
\end{array}\right]\vspace{-.15cm}$$
where $\hat F$ is an approximation of the expected value mapping $\FF$ in (\ref{map_vi}) given a vector sample of the random variable $\xi$. 
Then, (\ref{inclusion}) can be rewritten as 
\vspace{-.15cm}\begin{equation}\label{inclusion_extended}
\begin{aligned}
-\left[\begin{array}{c}
\hat F(\boldsymbol x,\xi)\\
0\\
{\bf{L}}\boldsymbol\lambda+\bar b
\end{array}\right]\hspace{-.1cm}\in&\hspace{-.1cm}\left[\begin{array}{ccc}
0 & 0 & {\bf{A}}^{\top} \\ 
0 & 0 & {\bf{L}}\\
-{\bf{A}} & -{\bf{L}} & 0
\end{array}\right]\hspace{-.2cm}\left[\begin{array}{c}
\tilde {\boldsymbol x}_{k}\\
\tilde {\boldsymbol z}_{k}\\
\tilde {\boldsymbol\lambda}_{k}
\end{array}\right]\\
+&\hspace{-.1cm}\left[\hspace{-.1cm}\begin{array}{ccc}
\alpha^{-1} & 0 & -{\bf{A}}^\top\\
0 & \nu^{-1} & -{\bf{L}}\\
-{\bf{A}} & -{\bf{L}} & \sigma^{-1}
\end{array}\hspace{-.1cm}\right]\hspace{-.2cm}\left[\begin{array}{c}
\tilde {\boldsymbol x}_{k}-\boldsymbol x_k\\
\tilde {\boldsymbol z}_{k}-\boldsymbol z_k\\
\tilde {\boldsymbol\lambda}_{k}-\boldsymbol \lambda_k
\end{array}\right]
\end{aligned}
\vspace{-.15cm}\end{equation}
By expanding (\ref{inclusion_extended}), we obtain the first steps of Algorithm \ref{DSFB_ave}. The damping part is distributed and it does not need preconditioning.

We note that, thanks to the fact that $\Phi+\bar{\mc B}$ is lower block triangular, the iterations of Algorithm \ref{DSFB_ave} are sequential, that is, $\tilde {\boldsymbol\lambda}_{k}$ use the last update $\tilde {\boldsymbol x}_{k}$ and $\tilde {\boldsymbol z}_{k}$ of the agents strategies and of the auxiliary variable respectively.

%
%\section{Convergence Proofs}\label{sec:proofs}
%
%In this section, we prove the convergence of Algorithm \ref{DSFB_ave}. We start with some preliminary results on the properties of the operators $\Phi^{-1}\bar{\mc A}$ and $\Phi^{-1}\bar{\mc B}$.

%Consider a matrix $\Phi$ as in (\ref{phi}) and suppose that the parameters $\alpha_i$, $\nu_i$ and $\sigma_i$ satisfy \eqref{parameters_phi}. 

%We can obtain Conditions \eqref{parameters_phi} imposing $\Phi$ to be diagonally dominant. This, in combination with the fact that it is symmetric, implies that $\Phi$ is also positive definite. Then, the operators $\Phi^{-1}\bar{\mc A}$ and $\Phi^{-1}\bar{\mc B}$ satisfy the following properties under the $\Phi$-induced norm $\norm{\cdot}_\Phi$.
%\begin{lemma}\label{op_phi}
%Suppose $\alpha_i$, $\nu_i$ and $\sigma_i$ satisfy (\ref{parameters_phi}) for all $i\in\mc N$. The following statements holds.
%\begin{itemize}
%\item $\Phi^{-1}\bar{\mc A}$ is $\beta\tau$-cocoercive where $0<\beta \leq\min \left\{\frac{1}{2 d^{*}}, \frac{\eta}{\ell^{2}}\right\}$, $\tau<\frac{1}{2\beta}$ and $d$ is the maximum weighted degree of $\mc G_\lambda$, $\eta$ is the strongly monotone constant and $\ell$ is the Lipschitz constant.
%\item $\Phi^{-1}\bar{\mc B}$ is maximally monotone. \fineass
%\end{itemize}
%\end{lemma}
%\begin{proof}
%Cocoercivity of $\Phi^{-1}\bar{\mc A}$ follows by the cocoercivity of $\bar{\mc A}$ and $\Phi^{-1}\bar{\mc B}$ is maximally monotone because $\bar{\mc B}$ is maximally monotone. See \cite[Lemma 7]{yi2019} for more details.
%\end{proof}

We are now ready to prove our convergence result.

\begin{proof}[Proof of Theorem \ref{theo_conv}]
The iterations of Algorithm \ref{DSFB_ave} are obtained by expanding (\ref{FB_ave}), solving for $\tilde x_{k}$, $\tilde z_{k}$ and $\tilde \lambda_{k}$ and adding the damping iteration. Therefore, Algorithm \ref{DSFB_ave} is the FB iteration with damping as in (\ref{FB_ave}). The convergence of the sequence $(\boldsymbol x^k,\boldsymbol\lambda^k)$ to a v-GNE of the game in \eqref{game} then follows by \cite[Th. 3.2]{rosasco2016} and Lemma \ref{lemma_zeri} since $\Phi^{-1}\bar{\mc A}$ is cocoercive by Lemma \ref{propertiesAB}.
\end{proof}

\subsection{Discussion}
The original result in \cite{rosasco2016} shows convergence for cocoercive and uniformly monotone operators.
%The original convergence result in \cite{rosasco2016} is for cocoercive and uniformly monotone operators. 
Moreover, they provide the proof for a generic approximation of the random mapping $\FF$. We note that fixing the type of approximation (SAA or SA) can be important for weakening the assumptions. Indeed, using the SAA scheme, cocoercivity is enough for convergence without further monotonicity assumptions. On the other hand, the SA approach requires cocoercivity and \textit{strict} monotonicity.  
Unfortunately, the mapping $\Phi^{-1}\bar{\mc A}$ is not strictly monotone (due to the presence of the Laplacian matrix), therefore we use SAA as in \eqref{approx_SAA}.

Concerning the stochastic error $\epsilon^k$, the assumption in \cite{rosasco2016} is similar to the so-called "variance reduction". Such an assumption is fundamental in the SAA scheme, but it can be avoided in the SA method \cite{koshal2013}. Indeed, in the SAA scheme, taking the average over a huge number of samples helps controlling the stochastic error and therefore finding a solution \cite{iusem2017,iusem2019}. For this reason, in our case the stepsize can be taken constant. For the SA scheme instead, the error is controlled in combination with the parameters involved, for instance, using a vanishing stepsize (possibly square summable) \cite{kannan2014} or using smoothing techniques (as a Tikhonov regularization) \cite{koshal2013}. In both cases, the damping parameter $\delta$ can be taken constant.

\section{Case study and numerical simulations}

As an example, we borrow an electricity market problem from \cite{xu2013} which can also be casted as a network Cournot game with markets capacity constraints \cite{yi2019,yu2017}.

Consider a set of $N$ generators (companies) that operate over a set of $m$ locations (markets). The random variable $\xi$ represent the demand uncertainty. Each generator decides the quantity $x_i$ of product to deliver in the $n_i$ markets it is connected with. Each company has a local cost function $c_i(x_i)$ related to the production of electricity and therefore we suppose it is deterministic.
%The cost function is deterministic as we suppose that the generators are able to compute the cost for production without uncertainty. On the other hand, 
Each market has a bounded capacity $b_j$ so that the collective constraints are given by $A\boldsymbol x\leq b$ where $A=[A_1,\dots,A_N]$ and $A_i$ specifies which market company $i$ participates in. Each location has a price, collected in $P:\RR^m\times \Xi\to\RR^m$ which is a linear function. The uncertainty variable can represent in this case the demand realization or some information about the past. %$P$ is supposed to be a linear function. 
The cost function of each agent is then given by $\JJ_i(x_i,x_{-i},\xi)=c_i(x_i)-\EE[P(\xi)^\top(A\boldsymbol x)A_ix_i].$
If $c_i(x_i)$ is strongly convex with Lipschitz continuous gradient and the prices are linear, the pseudo gradient of $\JJ_i$ is strongly monotone.% and Standing Assumption \ref{ass_cost} is satisfied.

\paragraph*{Numerical example}
As a numerical setting, we consider a set of 20 companies and 7 markets \cite{yi2019,yu2017}. Each company $i$ has a local constraint $0 \leq x_i \leq \gamma_i$ representing the capacity limit of generator $i$; each component of $\gamma_i$ is randomly drawn from $[1, 1.5]$.
% In terms of electricity market, this can be seen as the capacity limit of generator $i$. 
Each market $j$ has a maximal capacity $b_j$ randomly drawn from $[0.5, 1]$. The local cost function of the generator $i$ is $c_i(x_i) = \pi_i\sum_{j=1}^{n_i} ([x_i]_j)^2 + g_i^\top x_i$, where $[x_i]_j$ indicates the $j$ component of $x_i$.
$\pi_i$ is randomly drawn from $[1, 8]$, and each component of $g_i$ is randomly drawn from $[0.1, 0.6]$. Notice that $c_i(x_i)$ is strongly convex with Lipschitz continuous gradient. 

The price is taken as a linear function $P(\xi) = \bar P-D(\xi)A\boldsymbol x$ where each component of $\bar P =\op{col}(\bar P_1,\dots,\bar P_7)$ is randomly drawn from $[2,4]$. The uncertainty appears in the quantities $D(\xi)=\op{diag}\{d_1(\xi_1),\dots, d_7(\xi_7)\}$ that concern the total supply for each market. The entries of $D(\xi)$ are taken with a normal distribution with mean $0.8$ and finite variance. As in \cite{yi2019}, the dual variables graph is a cycle graph with the addition of the edges $(2,15)$ and $(6,13)$. The cost function of agent $i$ is influenced by the variables of the companies selling in the same market. This information can be retrieved from the graph in \cite[Fig. 1]{yi2019}.

The step sizes are the same for all agents $i$: $\alpha_i = 0.03$, $\nu_i = 0.2$, $\sigma_i= 0.03$. The initial point $x_{i,0}$ is randomly chosen within its local feasible set, and the initials dual variables $\lambda_{i,0}$ and $z_{i,0}$ are set to zero. The damping parameter is taken to be $\delta\in\{0.4, 0.7, 1\}$ to compare the results.

The plots in Fig. \ref{plot_sol} show the performance indicx $\frac{\norm{\boldsymbol x_{k+1}-\boldsymbol x^{*}}}{\norm{\boldsymbol x^{*}}}$ that indicates the convergence to a solution $x^*$. As one can see, the higher the averaging parameter the faster is the convergence. The oscillations depend on the approximation but we focus on the fact that the distance from the solution is decreasing. Moreover, it is interesting that for $\delta=1$ the algorithm is indeed a FB algorithm without damping and it converges with constant stepsize.
 
\begin{figure}
\centering
\includegraphics[scale=.4]{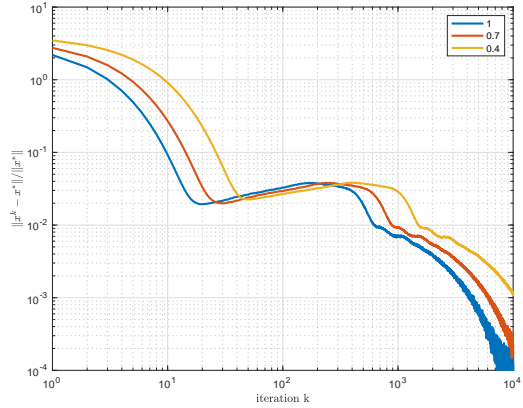}
\caption{Normalized distance from the GNE.}\label{plot_sol}
\end{figure}

\section{Conclusion}
The preconditioned forward--backward operator splitting is applicable to stochastic generalized Nash equilibrium problems to design distributed equilibrium seeking algorithms. Since the expected value is hard to compute in general, the sample average approximation can be used to ensure convergence almost surely.
Our simulations show that the damping step may be unnecessary for convergence. We will investigate this case as future research.

\bibliographystyle{IEEEtran}
\bibliography{IEEEabrv,Biblio}

% Generated by IEEEtran.bst, version: 1.14 (2015/08/26)
\begin{thebibliography}{10}
\providecommand{\url}[1]{#1}
\csname url@samestyle\endcsname
\providecommand{\newblock}{\relax}
\providecommand{\bibinfo}[2]{#2}
\providecommand{\BIBentrySTDinterwordspacing}{\spaceskip=0pt\relax}
\providecommand{\BIBentryALTinterwordstretchfactor}{4}
\providecommand{\BIBentryALTinterwordspacing}{\spaceskip=\fontdimen2\font plus
\BIBentryALTinterwordstretchfactor\fontdimen3\font minus
  \fontdimen4\font\relax}
\providecommand{\BIBforeignlanguage}[2]{{%
\expandafter\ifx\csname l@#1\endcsname\relax
\typeout{** WARNING: IEEEtran.bst: No hyphenation pattern has been}%
\typeout{** loaded for the language `#1'. Using the pattern for}%
\typeout{** the default language instead.}%
\else
\language=\csname l@#1\endcsname
\fi
#2}}
\providecommand{\BIBdecl}{\relax}
\BIBdecl

\bibitem{facchinei2010}
F.~Facchinei and C.~Kanzow, ``Generalized {{Nash}} equilibrium problems,''
  \emph{Annals of Operations Research}, vol. 175, no.~1, pp. 177--211, 2010.

\bibitem{facchinei2007}
F.~Facchinei and J.-S. Pang, \emph{Finite-dimensional variational inequalities
  and complementarity problems}.\hskip 1em plus 0.5em minus 0.4em\relax
  Springer Science \& Business Media, 2007.

\bibitem{pavel2007}
L.~Pavel, ``An extension of duality to a game-theoretic framework,''
  \emph{Automatica}, vol.~43, no.~2, pp. 226--237, 2007.

\bibitem{kulkarni2012}
A.~A. Kulkarni and U.~V. Shanbhag, ``On the variational equilibrium as a
  refinement of the generalized {Nash} equilibrium,'' \emph{Automatica},
  vol.~48, no.~1, pp. 45--55, 2012.

\bibitem{belgioioso2018}
G.~Belgioioso and S.~Grammatico, ``Projected-gradient algorithms for
  generalized equilibrium seeking in aggregative games are preconditioned
  forward-backward methods,'' in \emph{2018 European Control Conference
  (ECC)}.\hskip 1em plus 0.5em minus 0.4em\relax IEEE, 2018, pp. 2188--2193.

\bibitem{yi2019}
P.~Yi and L.~Pavel, ``An operator splitting approach for distributed
  generalized {Nash} equilibria computation,'' \emph{Automatica}, vol. 102, pp.
  111--121, 2019.

\bibitem{belgioioso2017}
G.~Belgioioso and S.~Grammatico, ``Semi-decentralized {Nash} equilibrium
  seeking in aggregative games with separable coupling constraints and
  non-differentiable cost functions,'' \emph{IEEE control systems letters},
  vol.~1, no.~2, pp. 400--405, 2017.

\bibitem{bau2011}
H.~H. Bauschke, P.~L. Combettes \emph{et~al.}, \emph{Convex analysis and
  monotone operator theory in Hilbert spaces}.\hskip 1em plus 0.5em minus
  0.4em\relax Springer, 2011, vol. 408.

\bibitem{yu2017}
C.-K. Yu, M.~Van Der~Schaar, and A.~H. Sayed, ``Distributed learning for
  stochastic generalized {Nash} equilibrium problems,'' \emph{IEEE Transactions
  on Signal Processing}, vol.~65, no.~15, pp. 3893--3908, 2017.

\bibitem{ravat2011}
U.~Ravat and U.~V. Shanbhag, ``On the characterization of solution sets of
  smooth and nonsmooth convex stochastic {Nash} games,'' \emph{SIAM Journal on
  Optimization}, vol.~21, no.~3, pp. 1168--1199, 2011.

\bibitem{xu2013}
H.~Xu and D.~Zhang, ``Stochastic {Nash} equilibrium problems: sample average
  approximation and applications,'' \emph{Computational Optimization and
  Applications}, vol.~55, no.~3, pp. 597--645, 2013.

\bibitem{watling2006}
D.~Watling, ``User equilibrium traffic network assignment with stochastic
  travel times and late arrival penalty,'' \emph{European Journal of
  Operational Research}, vol. 175, no.~3, pp. 1539--1556, 2006.

\bibitem{henrion2007}
R.~Henrion and W.~R{\"o}misch, ``On m-stationary points for a stochastic
  equilibrium problem under equilibrium constraints in electricity spot market
  modeling,'' \emph{Applications of Mathematics}, vol.~52, no.~6, pp. 473--494,
  2007.

\bibitem{demiguel2009}
V.~DeMiguel and H.~Xu, ``A stochastic multiple-leader {Stackelberg} model:
  analysis, computation, and application,'' \emph{Operations Research},
  vol.~57, no.~5, pp. 1220--1235, 2009.

\bibitem{abada2013}
I.~Abada, S.~Gabriel, V.~Briat, and O.~Massol, ``A generalized
  {Nash}--{Cournot} model for the northwestern european natural gas markets
  with a fuel substitution demand function: The gammes model,'' \emph{Networks
  and Spatial Economics}, vol.~13, no.~1, pp. 1--42, 2013.

\bibitem{staudigl2019}
R.~I. Bot, P.~Mertikopoulos, M.~Staudigl, and P.~T. Vuong,
  ``Forward-backward-forward methods with variance reduction for stochastic
  variational inequalities,'' \emph{arXiv preprint
  \textup{\texttt{arXiv:1902.03355}}}, 2019.

\bibitem{iusem2017}
A.~Iusem, A.~Jofr{\'e}, R.~I. Oliveira, and P.~Thompson, ``Extragradient method
  with variance reduction for stochastic variational inequalities,'' \emph{SIAM
  Journal on Optimization}, vol.~27, no.~2, pp. 686--724, 2017.

\bibitem{koshal2013}
J.~Koshal, A.~Nedic, and U.~V. Shanbhag, ``Regularized iterative stochastic
  approximation methods for stochastic variational inequality problems,''
  \emph{IEEE Transactions on Automatic Control}, vol.~58, no.~3, pp. 594--609,
  2013.

\bibitem{yousefian2017}
F.~Yousefian, A.~Nedi{\'c}, and U.~V. Shanbhag, ``On smoothing, regularization,
  and averaging in stochastic approximation methods for stochastic variational
  inequality problems,'' \emph{Mathematical Programming}, vol. 165, no.~1, pp.
  391--431, 2017.

\bibitem{yousefian2014}
------, ``Optimal robust smoothing extragradient algorithms for stochastic
  variational inequality problems,'' in \emph{53rd IEEE Conference on Decision
  and Control}.\hskip 1em plus 0.5em minus 0.4em\relax IEEE, 2014, pp.
  5831--5836.

\bibitem{rosasco2016}
L.~Rosasco, S.~Villa, and B.~C. V{\~u}, ``Stochastic forward--backward
  splitting for monotone inclusions,'' \emph{Journal of Optimization Theory and
  Applications}, vol. 169, no.~2, pp. 388--406, 2016.

\bibitem{palomar2010}
D.~P. Palomar and Y.~C. Eldar, \emph{Convex optimization in signal processing
  and communications}.\hskip 1em plus 0.5em minus 0.4em\relax Cambridge
  university press, 2010.

\bibitem{facchinei2007vi}
F.~Facchinei, A.~Fischer, and V.~Piccialli, ``On generalized {Nash} games and
  variational inequalities,'' \emph{Operations Research Letters}, vol.~35,
  no.~2, pp. 159--164, 2007.

\bibitem{iusem2019}
A.~N. Iusem, A.~Jofr{\'e}, R.~I. Oliveira, and P.~Thompson, ``Variance-based
  extragradient methods with line search for stochastic variational
  inequalities,'' \emph{SIAM Journal on Optimization}, vol.~29, no.~1, pp.
  175--206, 2019.

\bibitem{kannan2014}
A.~Kannan and U.~V. Shanbhag, ``The pseudomonotone stochastic variational
  inequality problem: Analytical statements and stochastic extragradient
  schemes,'' in \emph{2014 American Control Conference}.\hskip 1em plus 0.5em
  minus 0.4em\relax IEEE, 2014, pp. 2930--2935.

\end{thebibliography}

\end{document}